\renewcommand{\o}{{\omega}}
\renewcommand{\(}{\left\(}
\renewcommand{\)}{\right\)}
\renewcommand{\[}{\left\[}
\renewcommand{\]}{\right\]}
\renewcommand{\i}{\infty}
\numberwithin{equation}{section}
 \theoremstyle{plain}
\newtheorem{theorem}{Theorem}[section]
\newtheorem{lemma}[theorem]{Lemma}
\newtheorem{remark}[]{Remark}
\newtheorem{definition}[]{Definition}
\newtheorem{conjecture}[theorem]{Conjecture}
\newtheorem{corollary}[theorem]{Corollary}
\newtheorem{proposition}[theorem]{Proposition}
\def\proof{\@ifnextchar[{\@oproof}{\@nproof}}
\def\@oproof[#1][#2]{\trivlist\item[\hskip\labelsep\textit{#2 Proof of\
#1.}~]\ignorespaces}
\def\@nproof{\trivlist\item[\hskip\labelsep\textit{Proof.}~]\ignorespaces}
\newenvironment{proof-alt}
   {\vskip 0.15in \par\noindent{\it Proof of Proposition \ref{HSS}.}\hskip 0.5em\ignorespaces}
   {\hfill $\Box$\par\medskip}
\begin{document}
\title[A refinement of a result of Andrews and Newman]{A refinement of a result of Andrews and Newman on the sum of minimal excludants} 

\author{Nayandeep Deka Baruah}
\address{Nayandeep Deka Baruah, Department of Mathematical Sciences, Tezpur University, Napaam, Assam-784028, India.}
\email{nayan@tezu.ernet.in, nayandeeptezu@gmail.com}

\author{Subhash Chand Bhoria}
\address{Subhash Chand Bhoria, Department of Mathematics, Pt. Chiranji Lal Sharma Government College, Sector-14 Karnal,  Haryana-132001, India.}
\email{scbhoria89@gmail.com}

\author{Pramod Eyyunni}
\address{Pramod Eyyunni, Department of Mathematics, Indian Institute of Technology Indore, Simrol, Indore,  Madhya Pradesh-453552,  India.} 
\email{pramodeyy@iiti.ac.in, pramodeyy@gmail.com}

\author{Bibekananda Maji}
\address{Bibekananda Maji, Department of Mathematics, Indian Institute of Technology Indore, Simrol, Indore, Madhya Pradesh-453552,  India.}
\email{bibekanandamaji@iiti.ac.in,  bibek10iitb@gmail.com}

\thanks{2020 \textit{Mathematics Subject Classification.} Primary 11P83,  11P84; Secondary 11P81. \\
\textit{Keywords and phrases. Partitions, Minimal excludant,  Colored partitions,   Refinement, Partition congruences}}

\maketitle

\begin{abstract}
In this article,  we refine a result of Andrews and Newman, that is, the sum of minimal excludants over all the partitions of a number $n$ equals the number of partitions of $n$ into distinct parts with two colors. As a consequence, we find congruences modulo 4 and 8 for the functions appearing in this refinement. We also conjecture three further congruences for these functions. In addition, we also initiate the study of $k^{th}$ moments of minimal excludants. At the end, we also provide an alternate proof of a beautiful identity due to Hopkins, Sellers and Stanton.  
\end{abstract}

\section{Introduction}\label{intro}

The study of the minimal excludant of a partition has been an active area of research after being revived by the works of Andrews and Newman  \cite{geaMinexcl2019,  geaJISpaper2020}.  They defined
the minimal excludant of an integer partition $\pi$ as the least positive integer missing from the partition, denoted by $\text{mex}(\pi)$. For example, with $n=5$, the values of minimal excludants for the partitions of $n$ are: $\text{mex}(5)=1,\text{mex}(4+1)=2, \text{mex}(3+2)=1,\text{mex}(3+1+1)=2,\text{mex}(2+2+1)=3, \text{mex}(2+1+1+1)=3,\text{mex}(1+1+1+1+1)=2$. 
They discovered an elegant identity \cite [p.~250, Theorem 1.1]{geaMinexcl2019} involving the quantity $\sigma\text{mex}(n)$,  which denotes the sum of minimal excludants over all the partitions of $n$.  The identity is as follows:
\begin{align} 
\sum_{n=0}^{\infty}\sigma \text{mex}(n)q^n=(-q;q)^2_{\infty}=\sum_{n=0}^{\infty}D_2(n)q^n,  \label{AndrewsNewman Sigma Identity}
\end{align} 
where $D_2(n)$ represents the number of partitions of $n$ into distinct parts with two colors.
The same identity,  under a different terminology, was obtained earlier by Grabner and Knopfmacher \cite[p.~445, Equation (4.2)]{GrabnerKnopfmacher}. They studied the concept of `\emph{smallest gap}' in a partition that has exactly the same meaning as that of `minimal excludant'. 
The identity \eqref{AndrewsNewman Sigma Identity} has also been proved by Ballantine and Merca \cite{BallantineMerca}, employing purely combinatorial tools. For recent progress in this area, interested readers may refer to \cite{geaJISpaper2020,  barman-singh1,  barman-singh2,   Chakraborty-Ray,  sellers-dasilva, KBEM2021}. Here, and throughout the paper, $|q|<1$ and
\begin{align*}
(a;q)_0 :&=1, \qquad \\
(a;q)_n :&= (1-a)(1-aq)\cdots(1-aq^{n-1}),
\qquad n \geq 1, \\
(a;q)_{\i}:&= \lim_{n\to\i}(a;q)_n. 
\end{align*}
We also use the following notation:
\begin{itemize}

\item $\mathcal{P}(n):=$ the set of all partitions of $n$,

\item$p(n):=$ the number of partitions of $n$,

\item $\mathcal{D}_2^o(n):=$ the set of partitions of $n$ into distinct parts with two colors and an odd number of parts,

\item  $\mathcal{D}_2^e(n):=$ the set of partitions of $n$ into distinct parts with two colors and an even number of parts.

\end{itemize}

 Instead of summing the minimal excludants over all partitions of $n$, as done by Andrews and Newman in \cite{geaMinexcl2019}, we slightly modify the definition and consider two such sums parity-wise (over odd and even minimal excludants separately). 
\begin{definition} 
 For a positive integer $n$, we define the following functions:
\begin{align}
\sigma_o\textup{mex}(n):=\sum_{\substack{\pi\in \mathcal{P}(n)\\2\nmid\textup{mex}(\pi)}}\textup{mex}(\pi)\label{Sigma_odd_defn},
\\ \sigma_e\textup{mex}(n):=\sum_{\substack{\pi\in \mathcal{P}(n)\\2|\textup{mex}(\pi)}} \textup{mex}(\pi)\label{Sigma_even_defn}.
\end{align}
\end{definition}
For instance, with $n=5$,  one can observe that $\sigma_o\text{mex}(5)=1+1+3+3=8$ and $\sigma_e\text{mex}(5)=2+2+2=6$.

In this paper,  we refine the identity \eqref{AndrewsNewman Sigma Identity} by finding the generating functions for $ \sigma_o\textup{mex}(n)$ and $\sigma_e\textup{mex}(n)$.  

\section{Main Results}
In this section, we state our main results, the first of which connects the sum of minimal excludants considered parity wise with the number of partitions of $n$ into distinct parts with two colors,  according to the parity of their number of parts. We denote $D_2^e(n)$ (resp. $D_2^o(n)$) to be the number of partitions of $n$ into distinct parts with two colors and an even (resp. odd) number of parts. 
\begin{theorem}\label{main refinement} We have
\begin{align}
\sum_{n=0}^{\infty}\sigma_o\emph{mex}(n)q^n=\frac{(-q;q)^2_{\infty}+(q;q)^2_{\infty}}{2}=\sum_{n=0}^{\infty}D_2^e(n)q^n, \label{sigma_oGFN}
\end{align}
and
\begin{align}
\sum_{n=0}^{\infty}\sigma_e\emph{mex}(n)q^n=\frac{(-q;q)^2_{\infty}-(q;q)^2_{\infty}}{2}=\sum_{n=0}^{\infty}D_2^o(n)q^n.\label{sigma_eGFN}
\end{align}
\end{theorem}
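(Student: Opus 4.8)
The plan is to handle the two right-hand (``colored partition'') equalities and the two left-hand (``minimal excludant'') equalities separately, and to reduce the harder left-hand identities to the Andrews--Newman identity \eqref{AndrewsNewman Sigma Identity} together with a single auxiliary identity that I will recognize as a corollary of Jacobi's triple product.

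For the right-hand equalities I would introduce a variable $z$ marking the number of parts, so that the generating function for distinct-parts partitions into two colors becomes
\begin{equation*}
\prod_{k\geq 1}\left(1+zq^{k}\right)^{2}=(-zq;q)_{\infty}^{2}=\sum_{n\geq 0}\Big(\sum_{j}c_{n,j}\,z^{j}\Big)q^{n},
\end{equation*}
where $c_{n,j}$ is the number of such partitions of $n$ with exactly $j$ parts. Specializing $z=1$ gives $(-q;q)_{\infty}^{2}=\sum_{n}D_{2}(n)q^{n}$ with $D_{2}(n)=D_{2}^{e}(n)+D_{2}^{o}(n)$, while $z=-1$ gives $(q;q)_{\infty}^{2}=\sum_{n}\big(D_{2}^{e}(n)-D_{2}^{o}(n)\big)q^{n}$. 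Half the sum and half the difference of these two series are exactly the right-hand sides of \eqref{sigma_oGFN} and \eqref{sigma_eGFN}.

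For the left-hand equalities the first step is the generating function for partitions with prescribed minimal excludant. Since $\text{mex}(\pi)=m$ means precisely that $1,2,\dots,m-1$ all occur while $m$ is absent, deleting one copy of each of $1,\dots,m-1$ leaves a partition whose parts range over all positive integers except $m$; this yields the weight $q^{\binom{m}{2}}(1-q^{m})/(q;q)_{\infty}$ for partitions of this type. Multiplying by $m$ and summing over odd (resp. even) $m$ gives
\begin{equation*}
\sum_{n\geq 0}\sigma_o\text{mex}(n)q^{n}=\frac{1}{(q;q)_{\infty}}\sum_{\substack{m\geq 1\\ m\ \mathrm{odd}}}m\,q^{\binom{m}{2}}(1-q^{m}),
\end{equation*}
and the analogous formula over even $m$ for $\sigma_e\text{mex}$. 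Rather than evaluate the odd and even sums one at a time, I would compute their difference, which combines the two parities into the single alternating sum
\begin{equation*}
\sum_{n\geq 0}\big(\sigma_o\text{mex}(n)-\sigma_e\text{mex}(n)\big)q^{n}=\frac{1}{(q;q)_{\infty}}\sum_{m\geq 1}(-1)^{m+1}m\,q^{\binom{m}{2}}(1-q^{m}).
\end{equation*}
Using $\binom{m}{2}+m=\binom{m+1}{2}$ and reindexing the $q^{m}$ part, the inner series telescopes to $\sum_{m\geq 1}(-1)^{m+1}(2m-1)q^{\binom{m}{2}}$, which after the shift $m\mapsto k+1$ is precisely $\sum_{k\geq 0}(-1)^{k}(2k+1)q^{k(k+1)/2}=(q;q)_{\infty}^{3}$, a standard consequence of Jacobi's triple product. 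Hence the difference generating function equals $(q;q)_{\infty}^{2}$. Combined with \eqref{AndrewsNewman Sigma Identity}, which provides the sum $\sum_{n}\big(\sigma_o\text{mex}(n)+\sigma_e\text{mex}(n)\big)q^{n}=(-q;q)_{\infty}^{2}$, solving the resulting two-by-two linear system delivers the left-hand equalities of \eqref{sigma_oGFN} and \eqref{sigma_eGFN}.

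The main obstacle is the computation in the last paragraph: the parity split destroys the clean telescoping that proves the unrefined identity, so one must carry the sign $(-1)^{m+1}$ faithfully through the reindexing to land exactly on the coefficient $2m-1$, and then correctly identify the resulting theta-like series with $(q;q)_{\infty}^{3}$. Once this single auxiliary identity is in hand, the remaining steps --- the colored-partition specialization and the linear algebra against \eqref{AndrewsNewman Sigma Identity} --- are routine.
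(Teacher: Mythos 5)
Your proposal is correct and coincides essentially with the paper's own alternate proof of Theorem \ref{main refinement}: your alternating difference series $\sum_{n\geq 0}\big(\sigma_o\text{mex}(n)-\sigma_e\text{mex}(n)\big)q^n$ is exactly the generating function of $\bar{\sigma}\text{mex}(n)$, and your reindexing-plus-Jacobi evaluation to $(q;q)_{\infty}^{2}$ reproduces Lemma \ref{sigmabar gfn}, after which both arguments solve the same two-by-two linear system against \eqref{AndrewsNewman Sigma Identity}. Your $z$-marked product $(-zq;q)_{\infty}^{2}$ specialized at $z=\pm 1$ likewise matches the paper's identification (in \eqref{even_distinct_twocolor}) of the right-hand sides with $D_2^e(n)$ and $D_2^o(n)$.
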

\textbf{Example.}  In the tables below, we see that the identities \eqref{sigma_oGFN} and \eqref{sigma_eGFN} hold for $n=4$.
\begin{center}
\begin{tabular}{ cc }   
Table 1 & Table 2 \\  
\begin{tabular}{ |c|c|} 
\hline 
$\pi \in \mathcal{P}(4)$ & $\text{mex}(\pi)$\\
\hline
$4$ & $1$  \\

$3+1$ & $2$ \\

$2+2$ & $1$ \\

$2+1+1$ & $3$ \\

$1+1+1+1$ & $2$  \\
\hline
$\sigma_o\text{mex}(4)$ & $1+1+3=5$\\
\hline
$\sigma_e\text{mex}(4)$ & $2+2=4$ \\
\hline
\end{tabular} &  
\begin{tabular}{ |c|c| } 
\hline
$ \pi \in \mathcal{D}_2^o(4)$ & $\pi \in \mathcal{D}_2^e(4)$\\
\hline
$4_2$ & $3_2+1_2$  \\
$4_1$ & $3_2+1_1$\\
$2_2+1_2+1_1$ & $3_1+1_2 $\\
$2_1+1_2+1_1$ & $3_1+1_1$ \\
&  $2_2+2_1$\\
\hline
$D_2^o(4)=4$ & $D_2^e(4)=5$\\
\hline
\end{tabular} \\
\end{tabular}
\end{center}
As a consequence of Theorem \ref{main refinement}, we obtain the following congruences.
\begin{theorem}\label{congruence}
For any non-negative integer $n$,  
\begin{align}
\sigma_o \textup{mex}(2n+1) & \equiv 0 \pmod 4, \label{sigma o mod 4 cong} \\
\sigma_o \textup{mex}(4n+1) & \equiv 0 \pmod 8, \label{conj1} \\
 \sigma_e \textup{mex}(4n)  & \equiv 0 \pmod 4.  \label{econj1}
\end{align}
\end{theorem}
Based on computations evidence, we also propose a few more congruences for these functions. For more details see Conjecture \ref{main_conjecture} in Section \ref{concluding remarks}.

Now we state a proposition that we will use in several proofs.  As defined by Hopkins, Sellers, and Stanton \cite[Proposition 5]{HoSeStan2022},  let $x(m,n)$ be the number of partitions of $n$ whose minimal excludant is $m$. 
\begin{proposition}\label{master proposition}
Suppose that $n$ is a fixed non-negative integer.  Then for $z \in \mathbb{C}$, we have
\begin{align}
\sum_{m=1}^{\infty}x(m,n)z^m=p(n)+(z-1)\sum_{m=0}^{\infty}p\Big(n- \frac{m(m+1)}{2}\Big)z^m.\label{Master Identity}
\end{align}
\end{proposition}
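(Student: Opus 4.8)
The plan is to reduce \eqref{Master Identity} to a single elementary counting fact about partitions with a prescribed minimal excludant, and then to finish by a telescoping and reindexing manipulation of the resulting polynomial in $z$. Throughout I adopt the convention $p(N):=0$ for $N<0$, so that for each fixed $n$ every sum in sight is genuinely finite (the minimal excludant of a partition of $n$ cannot exceed the largest $m$ with $\tfrac{m(m-1)}{2}\le n$).

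The central step is the following count: for each $m\ge 1$, the number of partitions $\pi$ of $n$ with $\mathrm{mex}(\pi)\ge m$ equals $p\!\left(n-\tfrac{m(m-1)}{2}\right)$. Indeed, $\mathrm{mex}(\pi)\ge m$ holds exactly when each of $1,2,\ldots,m-1$ appears at least once in $\pi$. Deleting one copy of each of these parts sends such a $\pi$ to a partition of $n-(1+2+\cdots+(m-1))=n-\tfrac{m(m-1)}{2}$, and appending one copy of each of $1,\ldots,m-1$ to an arbitrary partition of $n-\tfrac{m(m-1)}{2}$ is the inverse map; this bijection gives the asserted count. For $m=1$ the condition is vacuous and the count is simply $p(n)$.

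Next, since $\mathrm{mex}(\pi)=m$ is equivalent to $\mathrm{mex}(\pi)\ge m$ together with $\mathrm{mex}(\pi)\not\ge m+1$, subtracting the two instances of the count above yields
\begin{equation*}
p^{\mathrm{mex}}(m,n)=p\!\left(n-\tfrac{m(m-1)}{2}\right)-p\!\left(n-\tfrac{m(m+1)}{2}\right).
\end{equation*}
I would substitute this into $\sum_{m\ge 1}p^{\mathrm{mex}}(m,n)z^m$ and split it into two sums. Reindexing the first by $j=m-1$ turns $\sum_{m\ge 1}p\!\left(n-\tfrac{m(m-1)}{2}\right)z^m$ into $z\sum_{j\ge 0}p\!\left(n-\tfrac{j(j+1)}{2}\right)z^j$, whereas the second sum $\sum_{m\ge 1}p\!\left(n-\tfrac{m(m+1)}{2}\right)z^m$ equals $\sum_{m\ge 0}p\!\left(n-\tfrac{m(m+1)}{2}\right)z^m-p(n)$ once the $m=0$ term $p(n)$ is restored. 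Both remaining sums then run over the same triangular-number index, so writing $S=\sum_{m\ge 0}p\!\left(n-\tfrac{m(m+1)}{2}\right)z^m$ the whole expression collapses to $zS-(S-p(n))=p(n)+(z-1)S$, which is precisely \eqref{Master Identity}.

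The generating-function manipulation is entirely routine, so the only substantive point, and hence the main obstacle, is the clean justification of the counting step: that imposing $\mathrm{mex}\ge m$ is the same as forcing the presence of each part $1,\ldots,m-1$, and that deletion of exactly one copy of each leaves a completely unconstrained partition of the reduced integer. I would take some care with the boundary case $m=1$ and with the negative-argument convention for $p$, in order to be sure the telescoping and the index shift are simultaneously valid for every $n$.
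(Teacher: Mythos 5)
Your proof is correct, and it reaches \eqref{Master Identity} by a genuinely more elementary route than the paper. The paper works with the two-variable generating function $M(z,q)=\sum_{n\ge 0}\sum_{m\ge 1}p^{\mathrm{mex}}(m,n)z^mq^n$, encodes the mex condition through the product formula $\sum_{n\ge 0}p^{\mathrm{mex}}(m,n)q^n=q^{\binom{m}{2}}(1-q^m)/(q;q)_{\infty}$ (equation \eqref{minexcl}), carries out the split-and-reindex manipulation inside the ring of formal power series in $q$, and only at the very end compares coefficients of $q^n$. You instead establish, for each fixed $n$, the coefficient-level identity
\begin{equation*}
p^{\mathrm{mex}}(m,n)=p\Big(n-\tfrac{m(m-1)}{2}\Big)-p\Big(n-\tfrac{m(m+1)}{2}\Big),
\end{equation*}
justified by the explicit bijection that deletes one copy of each of the parts $1,\dots,m-1$ from a partition with $\mathrm{mex}\ge m$, and then perform the same split, shift $j=m-1$, and recombination on the finite polynomial $\sum_{m\ge 1}p^{\mathrm{mex}}(m,n)z^m$. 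The algebraic skeletons of the two arguments are identical --- your difference formula is exactly the coefficientwise content of \eqref{minexcl}, since $q^{\binom{m}{2}}(1-q^m)/(q;q)_{\infty}$ is the difference of two shifted copies of $1/(q;q)_{\infty}$ --- but your version dispenses with infinite products and power-series bookkeeping entirely: for fixed $n$ all sums are finite (as you note, $\mathrm{mex}(\pi)\le$ the largest $m$ with $\binom{m}{2}\le n$), the identity holds for every $z\in\mathbb{C}$ with no convergence discussion, and the key step is a transparent bijection, with the boundary case $m=1$ and the convention $p(N)=0$ for $N<0$ handled correctly. What the paper's framing buys in exchange is the reusable object $M(z,q)$ of \eqref{M(z,q)}: the same generating function is differentiated at $z=-1$ in Lemma \ref{sigmabar gfn}, and the master identity is hit with the operator $\mathcal{D}_z=z\frac{\partial}{\partial z}$ in Theorem \ref{kth moment of mex}, so the generating-function setup amortizes across the rest of the paper, whereas your argument is self-contained and optimal if the proposition is the sole target.
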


\begin{remark}
We note that the sum on the right-hand side of the above result is in fact finite,  because by convention $p(n)=0$ for $n<0$.  Similarly,  the left-hand side is also a finite sum because for a fixed $n$ the minimal excludant of any partition of $n$ is trivially bounded by $n+1$.  
The comment also holds for all such `infinite-looking' partition sums in the sequel. 
\end{remark}

An immediate consequence of Proposition \ref{master proposition} arises by substituting $z=-1$  in \eqref{Master Identity}, which is a result due to Hopkins, Sellers and Stanton \cite [p.~5, Eq. (3)]{HoSeStan2022}, that is,
\begin{align*}
o(n)-e(n)=p(n)+2\sum_{m=1}^{\infty}(-1)^mp\left(n- \frac{m(m+1)}{2}\right),
\end{align*}
where $o(n)$ is the number of partitions of $n$ with an odd minimal excludant and $e(n)$ counts those with an even minimal excludant.  Andrews-Newmann \cite[Theorem 2]{geaJISpaper2020} and Hopkins-Sellers \cite[Theorem 1]{HopkinsSeller2020} independently showed that $o(n)$ equals the number of partitions of $n$ with non-negative crank.  This has been significantly generalized by Hopkins, Sellers and Stanton \cite [Theorem 2]{HoSeStan2022} recently.  

\textbf{The $k^{th}$ moments of minimal excludants:} We now study sums involving  $k^{th}$ powers of minimal excludants and generalize an identity of Andrews and Newman \cite[p. 251]{geaMinexcl2019}, which was crucial to their second proof of \eqref{AndrewsNewman Sigma Identity}.  
\begin{definition}
Let $k,  n \in \mathbb{N}$.  Then
\begin{align*}
\sigma^{(k)}\textup{mex}(n):&=\sum_{\pi\in \mathcal{P}(n)}\left(\textup{mex}(\pi)\right)^k,
\\ {\bar{\sigma}}^{(k)}\textup{mex}(n):&= \sum_{\pi\in \mathcal{P}(n)}(-1)^{\textup{mex}(\pi) -1}\left(\textup{mex}(\pi)\right)^k.
\end{align*}
\end{definition}
When $k=1$, we see that $\sigma^{(1)}\text{mex}(n) = \sigma \text{mex}(n)$ and we denote ${\bar{\sigma}}^{(1)}\text{mex}(n)$ by ${\bar{\sigma}}\text{mex}(n)$.  As a consequence of the next theorem we get exact formulae for $\sigma^{(k)}\text{mex}(n)$ and ${\bar{\sigma}}^{(k)}\text{mex}(n)$ in terms of partition functions.  

\begin{theorem}\label{kth moment of mex}
 For $k,n \in \mathbb{N}$ and $z \in \mathbb{C}$, the following identity holds true:
\begin{align}\label{Diferntial form of Master Identity}
\sum_{m=1}^{\infty} x(m,n)m^kz^m=\sum_{m=0}^{\infty}\left((m+1)^kz^{m+1}-m^kz^m\right)p\left(n- \frac{m(m+1)}{2}\right). 
\end{align}
\end{theorem}
In particular,  putting $z=1$ and $-1$, respectively, we have the following identities. 
\begin{corollary}
Given $k,  n \in \mathbb{N}$,  we have
\begin{equation}
\sigma^{(k)}{\rm mex}(n)=\sum_{m=0}^{\infty}\left((m+1)^k-m^k\right)p\left(n- \frac{m(m+1)}{2}\right), \label{moment_kth}
\end{equation}
\begin{equation}
{\bar{\sigma}}^{(k)}\emph{mex}(n)=\sum_{m=0}^{\infty}(-1)^m \left((m+1)^k+m^k\right)p\left(n- \frac{m(m+1)}{2}\right). \label{moment_bar}
\end{equation}
\end{corollary}

The corollary above also implies the next two  identities. The first one below arises in the second proof of the main result of Andrews and Newman on $\sigma\text{mex}(n)$ \cite[p. 251]{geaMinexcl2019} and the second is an analogous identity for $\bar{\sigma}\text{mex}(n)$.
\begin{corollary}\label{application_kth moment}
If $n$ is a positive integer, then 
\begin{align}
\sigma\emph{mex}(n)=\sum_{m=0}^{\infty}p\left(n- \frac{m(m+1)}{2}\right),\label{sigmamex_analogue}
\end {align}
\begin{align}
\bar{\sigma}\emph{mex}(n)=\sum_{m=0}^{\infty}(-1)^m(2m+1)p\left(n- \frac{m(m+1)}{2}\right).\label{sigmabarmex_analogue}
\end{align}
\end{corollary}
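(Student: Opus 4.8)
The plan is to obtain both identities as immediate specializations of Theorem \ref{kth moment of mex} at $k=1$. Recall that, by the definitions introduced just before that theorem, $\sigma^{(1)}\text{mex}(n)=\sigma\text{mex}(n)$ and $\bar{\sigma}^{(1)}\text{mex}(n)=\bar{\sigma}\text{mex}(n)$. Hence the only task is to read off what the general formulas \eqref{moment_kth} and \eqref{moment_bar} say when the exponent $k$ equals one, and to simplify the resulting coefficients.

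For \eqref{sigmamex_analogue}, I would set $k=1$ in \eqref{moment_kth}. The coefficient appearing there collapses to $(m+1)^1-m^1=(m+1)-m=1$, so every term of the sum reduces to a single copy of $p\left(n-\frac{m(m+1)}{2}\right)$, which is exactly \eqref{sigmamex_analogue}. Likewise, for \eqref{sigmabarmex_analogue} I would put $k=1$ in \eqref{moment_bar}; the coefficient then becomes $(m+1)^1+m^1=2m+1$, while the alternating sign $(-1)^m$ is already present in \eqref{moment_bar}, so the identity \eqref{sigmabarmex_analogue} follows at once.

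There is essentially no obstacle to overcome here: once Theorem \ref{kth moment of mex} is in hand, the corollary is a purely arithmetic specialization requiring only the elementary observations $(m+1)-m=1$ and $(m+1)+m=2m+1$. All the substantive content lives in the theorem's underlying generating-function identity \eqref{Diferntial form of Master Identity}; the corollary merely records its linear instance, which recovers the formula arising in the second proof of Andrews and Newman for $\sigma\text{mex}(n)$ together with its natural alternating analogue for $\bar{\sigma}\text{mex}(n)$.
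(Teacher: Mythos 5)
Your proposal is correct and follows exactly the paper's own proof: the authors likewise obtain both identities by substituting $k=1$ into \eqref{moment_kth} and \eqref{moment_bar}, with the same elementary simplifications $(m+1)-m=1$ and $(m+1)+m=2m+1$. Your explicit note that $\sigma^{(1)}\text{mex}(n)=\sigma\text{mex}(n)$ and $\bar{\sigma}^{(1)}\text{mex}(n)=\bar{\sigma}\text{mex}(n)$ is a slightly more careful record of a convention the paper states just before Theorem \ref{kth moment of mex}.
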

In the same spirit, we also define parity based sums involving $k^{th}$ powers of minimal excludants.
\begin{definition} For positive integers $k$ and $n$,  we define
\begin{align*}
\sigma_o^{(k)}\textup{mex}(n):=\sum_{\substack{\pi\in \mathcal{P}(n)\\2\nmid\textup{mex}(\pi)}}\left(\textup{mex}(\pi)\right)^k \text{and}
\quad \sigma_e^{(k)}\textup{mex}(n):=\sum_{\substack{\pi\in \mathcal{P}(n)\\2|\textup{mex}(\pi)}} \left(\textup{mex}(\pi)\right)^k. 
\end{align*}
\end{definition}
We have the following formulae. 
\begin{theorem}\label{kth moment_sigma_odd_even}
For positive integers $k$ and $n$, we have
\begin{align}
\sigma_o^{(k)}\emph{mex}(n)=\sum_{m=0}^{\infty}\left\{\delta_m(m+1)^k-(1-\delta_m)m^k\right\}p\left(n- \frac{m(m+1)}{2}\right) \label{General sigma odd formula}
\end{align}
and
\begin{align}
\sigma_e^{(k)}\emph{mex}(n)=\sum_{m=0}^{\infty}\left\{(1-\delta_m)(m+1)^k-\delta_mm^k\right\}p\left(n- \frac{m(m+1)}{2}\right),\label{General sigma even formula}
\end{align}
where the function $\delta_m$ is defined as 
\begin{align*}
\delta_m:=\frac{1+(-1)^m}{2}=\begin{cases}
1, & \text{if m is even},
\\0, & \text{otherwise}.
\end{cases}
\end{align*}
\end{theorem}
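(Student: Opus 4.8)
The plan is to recover the two parity-split moments from the single polynomial identity in $z$ already established in Theorem \ref{kth moment of mex}, using the parity filter obtained by evaluating at $z=1$ and $z=-1$. Write $F(z) := \sum_{m=1}^{\infty} p^{\mathrm{mex}}(m,n)\, m^k z^m$ for the left-hand side of \eqref{Diferntial form of Master Identity}. By the very definitions of $\sigma_o^{(k)}\text{mex}(n)$ and $\sigma_e^{(k)}\text{mex}(n)$ as the sums of $p^{\mathrm{mex}}(m,n) m^k$ restricted to odd and even $m$, the elementary relation $\frac{1\pm(-1)^m}{2}$ isolates the two parities, so that
\begin{align*}
\sigma_o^{(k)}\text{mex}(n) = \frac{F(1) - F(-1)}{2}, \qquad \sigma_e^{(k)}\text{mex}(n) = \frac{F(1) + F(-1)}{2}.
\end{align*}

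First I would record the two evaluations of $F$ that are already in hand. Setting $z=1$ in \eqref{Diferntial form of Master Identity} gives $F(1) = \sigma^{(k)}\text{mex}(n)$, namely the right-hand side of \eqref{moment_kth}. Setting $z=-1$ gives $F(-1) = -\bar{\sigma}^{(k)}\text{mex}(n)$, the extra minus sign arising because the definition of $\bar{\sigma}^{(k)}\text{mex}(n)$ carries the factor $(-1)^{\text{mex}(\pi)-1}$ rather than $(-1)^{\text{mex}(\pi)}$; concretely, $-F(-1)$ equals the right-hand side of \eqref{moment_bar}. Both $F(1)$ and $F(-1)$ are therefore explicit partition sums indexed by the triangular numbers $m(m+1)/2$.

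Next I would substitute these two expressions into the parity combinations above and, for each fixed $m$, collect the coefficient of $p\left(n-\frac{m(m+1)}{2}\right)$. For the odd moment this coefficient is $\frac{1}{2}\left[(m+1)^k - m^k + (-1)^m\left((m+1)^k + m^k\right)\right]$, and regrouping the $(m+1)^k$ and $m^k$ terms turns the factors $\frac{1+(-1)^m}{2}$ and $\frac{(-1)^m-1}{2}$ into $\delta_m$ and $-(1-\delta_m)$ respectively, giving exactly $\delta_m(m+1)^k - (1-\delta_m)m^k$ as in \eqref{General sigma odd formula}. The even moment is handled identically but with the opposite sign on $F(-1)$, producing $(1-\delta_m)(m+1)^k - \delta_m m^k$ as in \eqref{General sigma even formula}.

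Since all of the analytic content already resides in Theorem \ref{kth moment of mex}, no real obstacle remains: the argument is a routine parity extraction. The only point demanding genuine care is the sign bookkeeping at $z=-1$ — in particular keeping track of the minus sign relating $F(-1)$ and $\bar{\sigma}^{(k)}\text{mex}(n)$, and correctly identifying $\frac{1+(-1)^m}{2}$ with $\delta_m$ and $\frac{1-(-1)^m}{2}$ with $1-\delta_m$ throughout the regrouping.
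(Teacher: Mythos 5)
Your proposal is correct and follows essentially the same route as the paper: both take the half-sum and half-difference of the $z=1$ and $z=-1$ specializations of Theorem \ref{kth moment of mex} (i.e., of \eqref{moment_kth} and \eqref{moment_bar}), identify these combinations with $\sigma_o^{(k)}\text{mex}(n)$ and $\sigma_e^{(k)}\text{mex}(n)$ via the parity filter $\frac{1\pm(-1)^m}{2}$, and regroup coefficients into the $\delta_m$ form. Your sign bookkeeping, including $F(-1)=-\bar{\sigma}^{(k)}\text{mex}(n)$, matches the paper's computation exactly.
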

We mention an immediate corollary of the above theorem that gives explicit formulae for the functions $\sigma_o^{(1)}\text{mex}(n)$ and $\sigma_e^{(1)} \text{mex}(n)$, which are nothing but $\sigma_o{\rm mex}(n)$ and $\sigma_e{\rm mex}(n)$, respectively,  as defined in \eqref{Sigma_odd_defn} and \eqref{Sigma_even_defn}. 

\begin{corollary}\label{sigma_odd_even_mex} 
Let $t_m=\frac{m(m+1)}{2}$ be the $m^{th}$ triangular number.  
For $n \in \mathbb{N}$, one has
\begin{align}
\sigma_o\emph{mex}(n)&=\sum_{m=0}^{\infty}(2m+1)\left\{p\left(n- t_{2m}\right)-p\left(n- t_{2m+1}\right)\right\}\label{sigma odd formula}
\\ &=\left\{p(n)-p(n-1)\right\}+3\left\{p(n-3)-p(n-6)\right\}+\cdots \nonumber
\end{align}
and
\begin{align}
\sigma_e\emph{mex}(n)&=\sum_{m=0}^{\infty}(2m+2)\left\{p\left(n- t_{2m+1}\right)-p\left(n- t_{2m}\right)\right\}\label{sigma even formula}
\\&=2\left\{p(n-1)-p(n-3)\right\}+4\left\{p(n-6)-p(n-10)\right\}+\cdots.  \nonumber
\end{align}
\end{corollary}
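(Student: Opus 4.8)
The plan is to specialize Theorem \ref{kth moment_sigma_odd_even} to the case $k=1$, since by definition $\sigma_o\text{mex}(n)=\sigma_o^{(1)}\text{mex}(n)$ and $\sigma_e\text{mex}(n)=\sigma_e^{(1)}\text{mex}(n)$. First I would substitute $k=1$ into \eqref{General sigma odd formula}, so that the summand coefficient becomes $\delta_m(m+1)-(1-\delta_m)m$. Recalling that $\delta_m=1$ for even $m$ and $\delta_m=0$ for odd $m$, this coefficient simplifies to
\begin{align*}
\delta_m(m+1)-(1-\delta_m)m=
\begin{cases}
m+1, & m \text{ even},\\
-m, & m \text{ odd}.
\end{cases}
\end{align*}

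Next I would split the sum over $m$ according to parity. Writing the even indices as $m=2j$ and the odd indices as $m=2j+1$, the even term contributes $(2j+1)\,p(n-t_{2j})$ while the immediately following odd term contributes $-(2j+1)\,p(n-t_{2j+1})$. Grouping these two consecutive terms gives the block $(2j+1)\{p(n-t_{2j})-p(n-t_{2j+1})\}$; summing over $j\ge 0$ and relabelling $j\to m$ produces \eqref{sigma odd formula}, and reading off the first two blocks recovers the displayed expansion.

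The computation for $\sigma_e\text{mex}(n)$ is entirely analogous, starting from \eqref{General sigma even formula} with $k=1$, where the summand coefficient is $(1-\delta_m)(m+1)-\delta_m m$. This equals $m+1$ when $m$ is odd and $-m$ when $m$ is even, so in particular the $m=0$ term vanishes. Pairing each odd index $m=2j+1$ (contributing $(2j+2)\,p(n-t_{2j+1})$) with the subsequent even index $m=2j+2$ (contributing $-(2j+2)\,p(n-t_{2j+2})$) yields the block $(2j+2)\{p(n-t_{2j+1})-p(n-t_{2j+2})\}$, and summing over $j\ge 0$ gives \eqref{sigma even formula}.

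There is no genuine obstacle here: the corollary is a direct specialization of Theorem \ref{kth moment_sigma_odd_even}, and the only point demanding care is the bookkeeping of the parity split together with the correct pairing of consecutive terms, so that the signs and the triangular-number indices line up. The finiteness of the displayed sums is guaranteed by the Remark following Theorem \ref{kth moment of mex}, since $p$ vanishes at negative arguments.
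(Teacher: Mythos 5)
Your proposal is correct and follows essentially the same route as the paper: the paper also sets $k=1$ in \eqref{General sigma odd formula}, splits the sum by the parity of $m$ via $\delta_m$, relabels the even and odd indices, and pairs consecutive terms to get \eqref{sigma odd formula}, treating the even case with the one-line remark that it is ``almost similar'' (you simply write that case out explicitly). One point worth recording: your even-case computation, which yields the block $(2m+2)\left\{p\left(n-t_{2m+1}\right)-p\left(n-t_{2m+2}\right)\right\}$, is in fact the correct statement, and it exposes a misprint in the displayed equation \eqref{sigma even formula}, where $t_{2m}$ should read $t_{2m+2}$; this is confirmed by the expansion $2\left\{p(n-1)-p(n-3)\right\}+4\left\{p(n-6)-p(n-10)\right\}+\cdots$ printed beneath it and by the check $\sigma_e\mathrm{mex}(4)=2\left\{p(3)-p(1)\right\}=4$, whereas the literal formula with $t_{2m}$ would give a negative value.
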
 

Recall that $o(n)$ is the number of partitions of $n$ with an odd minimal excludant. Denote by $o_1(n)$ (resp. $o_3(n)$) the number of partitions of $n$ with minimal excludant congruent to $1$ modulo $4$ (resp. $3$ modulo 4). So, $o(n) =o_1(n) +o_3(n)$. Also, let $q(n)$ represent the number of partitions of $n$ into distinct parts.  Our results also lead to an alternative proof for the following result of Hopkins, Sellers and Stanton \cite[Proposition 8]{HoSeStan2022}.
\begin{proposition}\label{HSS}
For $n\geq 1$, 
\begin{align*}
o_1(n)=\begin{cases}
o_3(n), & \text{if n is odd},
\\o_3(n)+q(n/2), & \text{otherwise}.
\end{cases}
\end{align*}
\end{proposition}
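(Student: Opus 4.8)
The plan is to compute the generating functions for $o_1(n)$ and $o_3(n)$ directly from the minimal excludant generating function \eqref{minexcl} and to show that their difference has an explicit product form supported on even powers of $q$. A partition counted by $o_1$ (resp. $o_3$) has minimal excludant $m=4j+1$ (resp. $m=4j+3$), so summing \eqref{minexcl} over the relevant residue class and subtracting gives
\begin{equation*}
\sum_{n=0}^{\infty}\bigl(o_1(n)-o_3(n)\bigr)q^n=\frac{1}{(q;q)_{\infty}}\sum_{k=0}^{\infty}(-1)^k q^{\binom{2k+1}{2}}\bigl(1-q^{2k+1}\bigr),
\end{equation*}
since, as $m=2k+1$ runs over the odd integers, $m\equiv 1\pmod 4$ exactly when $k$ is even and $m\equiv 3\pmod 4$ exactly when $k$ is odd, which is precisely recorded by the sign $(-1)^k$. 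Writing $N(q)$ for the numerator on the right, the claim reduces to showing $N(q)/(q;q)_{\infty}=(-q^2;q^2)_{\infty}$, and I would first note the two exponents $\binom{2k+1}{2}=2k^2+k$ and $\binom{2k+1}{2}+(2k+1)=2k^2+3k+1$.

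The main step is to collapse the two one-sided sums making up $N(q)$ into a single bilateral theta series. The key algebraic observation is $2k^2+3k+1=2(k+1)^2-(k+1)$, so that substituting $\ell=-(k+1)$ turns the second family $-(-1)^k q^{2k^2+3k+1}$ into $(-1)^{\ell}q^{2\ell^2+\ell}$ with $\ell$ ranging over the negative integers. Combining this with the first family $\sum_{k\ge 0}(-1)^k q^{2k^2+k}$ yields
\begin{equation*}
N(q)=\sum_{k=-\infty}^{\infty}(-1)^k q^{2k^2+k}=\sum_{k=-\infty}^{\infty}(q^2)^{k^2}(-q)^k.
\end{equation*}
Applying the Jacobi triple product with base $q^2$ and $z=-q$ then gives $N(q)=\prod_{m=1}^{\infty}(1-q^{4m})(1-q^{4m-1})(1-q^{4m-3})=(q^4;q^4)_{\infty}(q;q^2)_{\infty}$, where the final equality merges the two odd residue classes modulo $4$ into the product over all odd exponents.

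Finally I would divide by $(q;q)_{\infty}$ and simplify: using $(q;q)_{\infty}=(q;q^2)_{\infty}(q^2;q^2)_{\infty}$, the factor $(q;q^2)_{\infty}$ cancels and leaves $N(q)/(q;q)_{\infty}=(q^4;q^4)_{\infty}/(q^2;q^2)_{\infty}=(-q^2;q^2)_{\infty}$. Since $(-q^2;q^2)_{\infty}=\sum_{n\ge 0}q(n)q^{2n}$ is the distinct-parts generating function in the variable $q^2$, comparing coefficients of $q^n$ shows $o_1(n)-o_3(n)=0$ when $n$ is odd and $o_1(n)-o_3(n)=q(n/2)$ when $n$ is even, which is exactly the proposition. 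I expect the reindexing into the bilateral theta series — correctly tracking the sign and matching $2k^2+3k+1$ with $2\ell^2+\ell$ at negative indices — to be the only delicate point; once $N(q)$ is in bilateral form, the Jacobi triple product and the Pochhammer simplifications are routine.
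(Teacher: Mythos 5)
Your proof is correct and takes essentially the same route as the paper's: the paper arrives at the identity $\sum_{n\ge 0}(o_1(n)-o_3(n))q^n=\frac{1}{(q;q)_\infty}\sum_{k\ge 0}(-1)^k q^{\binom{2k+1}{2}}(1-q^{2k+1})$ by substituting $z=i$ into its two-variable generating function $L(z,q)$ (rather than inserting the sign $(-1)^k$ directly, as you do), then performs exactly your reindexing $j=-(k+1)$ to collapse the numerator into the bilateral series $\sum_{j=-\infty}^{\infty}(-1)^j q^{2j^2+j}$, applies the Jacobi triple product with base $q^2$ and $z=-q$, and simplifies via Euler's identity to $(-q^2;q^2)_\infty$. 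All of your delicate steps (the exponent match $2k^2+3k+1=2(k+1)^2-(k+1)$, the sign bookkeeping, and the product simplification) check out.
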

In \cite[Proposition 3.2]{HopSellYee2021}, Hopkins, Sellers and Yee proved the above result by analytic methods. 
 
We organize the rest of the paper in the following way. In the next section, we collect some useful results which will be essential for our proofs. Section \ref{proofs} is devoted to the proofs of the main results. In Section \ref{sec-five}, we present an alternative proof of Proposition \ref{HSS}. Finally, in the last section, we enunciate some open problems arising from this work. 

\section{Preliminaries}

We first recall some standard results on Ramanujan's theta function $f(a,b)$, which is defined, for $|ab|<1$, by
\begin{equation*}
f(a, b):= \sum_{n=-\infty}^{\infty} a^{\frac{n(n+1)}{2}} b^{\frac{n(n-1)}{2}}. 
\end{equation*} 
The famous triple product identity due to Jacobi is given by \cite[p. 35, Entry 19]{RNBIII}
\begin{equation}\label{JTPI}
f(a, b) = (-a; ab)_{\infty} (-b; ab)_{\infty} (ab; ab)_{\infty}.
\end{equation}
In the sequel,  we use the notation $f_k:=(q^k;q^k)_\infty$, $k\ge1$.
As special cases of \eqref{JTPI},  we have
\begin{align} \label{JTP for phi(q)}
\varphi(q):&= f(q,q)= \sum_{n=-\infty}^\infty q^{n^2}= (-q;q^2)_\infty^2 (q^2; q^2)_\infty=\dfrac{f_2^5}{f_1^2f_4^2}\\\intertext{and}
\psi(q):&= f(q,q^3)=\sum_{m=0}^\infty q^{m+1\choose2}=(-q;q^4)_\infty(-q^3;q^4)_\infty(q^4;q^4)_\infty=\dfrac{f_2^2}{f_1}. \label{Ramatheta}  
\end{align}
Replacing $q$ by $-q$ in \eqref{JTP for phi(q)}, we also have
\begin{align} \label{JTP for phi(-q)}
\varphi(-q)&= (q;q^2)_\infty^2 (q^2; q^2)_\infty=\dfrac{f_1^2}{f_2}. 
\end{align}
Now, from \cite[p. 40, Entry 25]{RNBIII}, we note that
\begin{align}\label{phiplus}
\varphi(q)&+\varphi(-q)=2\varphi(q^4)\\\intertext{and}
\label{phiminus}\varphi(q)&-\varphi(-q)=4q\psi(q^8).
\end{align}
Therefore, 
\begin{align*}
\varphi(q)=\varphi(q^4)+2q\psi(q^8), \quad \text{and}\quad
\varphi(-q)=\varphi(q^4)-2q\psi(q^8).
\end{align*} 
Using the above two identities and invoking \eqref{JTP for phi(q)}, \eqref{Ramatheta} and \eqref{JTP for phi(-q)},  we readily get the following  2-dissection formulas for $1/f_1^2$ and $f_1^{2}$: 
\begin{align}\label{reci}
\frac{1}{f_1^2}& = \frac{f_8^5}{f_2^5 f_{16}^2} +2q \frac{f_4^2 f_{16}^2}{f_2^5 f_8},\\
\label{f_1^2}f_1^2 & = \frac{f_2 f_8^5}{f_4^2 f_{16}^2} - 2q \frac{f_2 f_{16}^2}{f_8}. 
\end{align}
We also recall the following 2-dissection of $\psi(q)$ from \cite[p. 49, Corollary (ii)]{RNBIII}:
\begin{equation}\label{psi 2diss}
\psi(q) = f(q^6, q^{10}) + q f(q^2, q^{14}).
\end{equation}
We end this section with another important $q$-series identity namely, Jacobi's identity \cite[p. 39, Entry 24(ii)]{RNBIII}:
\begin{align}
\sum_{m=0}^{\infty}(2m+1)(-1)^mq^{m+1\choose2}&=(q;q)_{\infty}^3. \label{Jacobi}
\end{align}

The next section is devoted to proving our main results. 
\section{Proofs of main results}\label{proofs}
\begin{proof}[Theorem \rm{\ref{main refinement}}][]
 Let $x(2m+1, n)$ denote the number of partitions of $n$ whose minimal excludant is $2m+1$. Thus, we have
\begin{equation}
\sum_{n=0}^{\infty} x(2m+1,n)q^n=\frac{\displaystyle q^{1+2+\cdots+2m}}{\displaystyle \prod_{ {\substack{r=1 \\ r \neq {2m+1}}}}^{\infty}(1-q^r)}=\frac{q^{{2m+1}\choose2}(1-q^{2m+1})}{(q;q)_{\infty}}.\label{ODDminexcl}
\end{equation}
Now we introduce a two-parameter function $L(z,q)$ with complex variables $z$ and $q$, where the exponent of $z$ keeps track of the odd minimal excludant $2m+1$. Thus, 
\begin{align}
L(z,q):= \sum_{n=0}^{\infty}\sum_{m=0}^{\infty}  x(2m+1,n)z^{2m+1}q^n  & = \sum_{m=0}^{\infty}z^{2m+1}\sum_{n=0}^{\infty} x(2m+1,n)q^n\nonumber\\ &=\frac{1}{(q;q)_{\infty}}\sum_{m=0}^{\infty}z^{2m+1}q^{{2m+1}\choose2}(1-q^{2m+1}), \label{L(z,q)}
\end{align}
where we used \eqref{ODDminexcl} in the final step.  Upon differentiating $L(z,q)$ with respect to $z$ and putting $z=1$, we get
\begin{align}
\frac{\partial}{\partial z}L(z,q)\Bigg|_{z=1}
&=\sum_{n=0}^{\infty}\Bigg(\sum_{m=0}^{\infty}(2m+1) x(2m+1,n)\Bigg)q^n.\label{ODDpartial derivative term}
\end{align}
Observe that the inner sum of the right hand side above is
\begin{align}
\sum_{m=0}^{\infty}(2m+1) x(2m+1,n)=\sum_{\substack{\pi\in \mathcal{P}(n)\\2\nmid\text{mex}(\pi)}}\text{mex}(\pi)=\sigma_o\text{\text{mex}}(n).\label{sigmaODD expression}
\end{align}
After making use of \eqref{sigmaODD expression}, \eqref{L(z,q)} in \eqref{ODDpartial derivative term}, we obtain
\begin{align}
\sum_{n=0}^{\infty}\sigma_o\text{mex}(n)q^n&= \Bigg[ \frac{\partial}{\partial z} \left( \frac{1}{(q;q)_{\infty}}\sum_{m=0}^{\infty}z^{2m+1}q^{{2m+1}\choose2} \left(1-q^{2m+1} \right) \right)\Bigg]_{z=1}\nonumber
\\&=\frac{1}{(q;q)_{\infty}}\sum_{m=0}^{\infty}(2m+1)q^{{2m+1}\choose2} \left(1-q^{2m+1} \right)\nonumber
\\&=\frac{1}{(q;q)_{\infty}}\Bigg(\sum_{m=0}^{\infty}(2m+1)q^{2m+1\choose2}-\sum_{m=0}^{\infty}(2m+1)q^{2m+2\choose2}\Bigg)\nonumber
\\&=\frac{1}{(q;q)_{\infty}}\Bigg(\sum_{m=0}^{\infty}(2m+1)q^{2m+1\choose2}-\sum_{m=0}^{\infty}(2m+2)q^{2m+2\choose2}+\sum_{m=0}^{\infty}q^{2m+2\choose2}\Bigg)\nonumber
\\&=\frac{1}{(q;q)_{\infty}}\Bigg(\sum_{m=0}^{\infty}(m+1)(-1)^mq^{m+1\choose2}+\sum_{m=0}^{\infty}q^{2m+2\choose2}\Bigg).\label{Step before dividing 2}
\end{align}
Now multiplying and dividing by $2$ in the right hand side of the last equality \eqref{Step before dividing 2}, and then simplifying yields,
{\allowdisplaybreaks \begin{align}
&\frac{1}{2(q;q)_{\infty}}\Bigg(\sum_{m=0}^{\infty}(2m+2)(-1)^mq^{m+1\choose2}+\sum_{m=0}^{\infty}2q^{2m+2\choose2}\Bigg)\nonumber
\\&=\frac{1}{2(q;q)_{\infty}}\Bigg(\sum_{m=0}^{\infty}(2m+1)(-1)^mq^{m+1\choose2}+\sum_{m=0}^{\infty}(-1)^mq^{m+1\choose2}+\sum_{m=0}^{\infty}2q^{2m+2\choose2}\Bigg)\nonumber
\\&=\frac{1}{2(q;q)_{\infty}}\Bigg(\sum_{m=0}^{\infty}(2m+1)(-1)^mq^{m+1\choose2}+\sum_{m=0}^{\infty}q^{2m+1\choose2}-\sum_{m=0}^{\infty}q^{2m+2\choose2}+\sum_{m=0}^{\infty}2q^{2m+2\choose2}\Bigg)\nonumber
\\&=\frac{1}{2(q;q)_{\infty}}\Bigg(\sum_{m=0}^{\infty}(2m+1)(-1)^mq^{m+1\choose2}+\sum_{m=0}^{\infty}q^{2m+1\choose2}+\sum_{m=0}^{\infty}q^{2m+2\choose2}\Bigg)\nonumber
\\&=\frac{1}{2(q;q)_{\infty}}\Bigg(\sum_{m=0}^{\infty}(2m+1)(-1)^mq^{m+1\choose2}+\sum_{m=0}^{\infty}q^{m+1\choose2}\Bigg).  \label{Dividingby2}
\end{align}}
Utilizing $\eqref{Dividingby2}$  in \eqref{Step before dividing 2}, we get
\begin{align}
\sum_{n=0}^{\infty}\sigma_o\text{mex}(n)q^n&=\frac{1}{2(q;q)_{\infty}}\Bigg(\sum_{m=0}^{\infty}(2m+1)(-1)^mq^{m+1\choose2}+\sum_{m=0}^{\infty}q^{m+1\choose2}\Bigg). \label{Step after dividing 2}
\end{align}
We now invoke \eqref{Jacobi} and \eqref{Ramatheta} and substitute the product side of these identities for the two series in the right hand side of \eqref{Step after dividing 2} to obtain
\begin{align}
\sum_{n=0}^{\infty}\sigma_o\text{mex}(n)q^n =\frac{1}{2(q;q)_{\infty}}\Bigg((q;q)_{\infty}^3+\frac{(q^2;q^2)_{\infty}}{(q;q^2)_{\infty}}\Bigg) &=\frac{1}{2}\Bigg(\frac{(q;q)_{\infty}^3}{(q;q)_{\infty}}+\frac{(q^2;q^2)_{\infty}}{(q;q)_{\infty}(q;q^2)_{\infty}}\Bigg)\nonumber
\\&=\frac{1}{2}\Big((q;q)_{\infty}^2+(-q;q)_{\infty}^2\Big), \nonumber
\end{align}
where the last equality follows from Euler's identity, i.e.,  $(-q;q)_\infty = 1/(q;q^2)_\infty$.  
Observe that the last expression above can be written as
\begin{align}\label{even_distinct_twocolor}
(-q;q)^2_{\infty}+(q;q)^2_{\infty} & =\sum_{n=0}^{\infty}\left(D_2^e(n)+D_2^o(n)\right)q^n+\sum_{n=0}^{\infty}\left(D_2^e(n)-D_2^o(n)\right)q^n\nonumber  \\
 & =2\sum_{n=0}^{\infty}D_2^e(n)q^n.
\end{align}
So we conclude that \eqref{sigma_oGFN} holds true. Finally, subtracting \eqref{sigma_oGFN} from \eqref{AndrewsNewman Sigma Identity}, we obtain \eqref{sigma_eGFN}.
\end{proof}

\noindent \textbf{An alternative proof of Theorem  \ref{main refinement}:}
First, we state and prove a lemma. 
\begin {lemma}\label{sigmabar gfn}
We have 
\begin{equation}\label{sigmasigmabar}
\sum_{n=0}^{\infty}\bar{\sigma}\emph{mex}(n)q^n=(q; q)^2_{\infty}.
\end{equation}
\end{lemma}

\begin{proof} Let $x(m, n)$ be the number of partitions of $n$ whose minimal excludant is $m$.   Proceeding analogously as in the previous proof from \eqref{ODDminexcl} to \eqref{L(z,q)},  we can show that 
\begin{align}
M(z,q):= \sum_{n=0}^{\infty}\sum_{m=1}^{\infty}  x(m,n)z^m q^n 
=\frac{1}{(q;q)_{\infty}}\sum_{m=1}^{\infty}z^mq^{m\choose2}(1-q^m).  \label{M(z,q)}
\end{align}
 Upon differentiating $M(z, q)$ with respect to $z$ and substituting $z=-1$,  we arrive at
\begin{equation}
\frac{\partial}{\partial z}M(z,q)\Bigg|_{z=-1}
=\sum_{n=0}^{\infty}\Bigg(\sum_{m=1}^{\infty}m(-1)^{m-1}  x(m,n)\Bigg)q^n.\label{partial derivative term}
\end{equation}
The inner sum in the right hand side above is actually
\begin{align*}
\sum_{m=1}^{\infty}m(-1)^{m-1}  x(m,n)= \sum_{\pi\in \mathcal{P}(n)}(-1)^{\text{mex}(\pi) -1 }\text{mex}(\pi)=\bar{\sigma}\text{\text{mex}}(n). 
\end{align*}
Putting this back in \eqref{partial derivative term} gives us
\begin{equation}\label{auxiliaryid}
\sum_{n=0}^{\infty}\bar{\sigma}\text{mex}(n)q^n = \frac{\partial}{\partial z}M(z,q)\Bigg|_{z=-1}.
\end{equation}
We now compute the right-hand side of the above by invoking the expression for $M(z, q)$ in \eqref{M(z,q)},  that is,  
\begin{align*}
& \Bigg[\frac{\partial}{\partial z}\left(  \frac{1}{(q;q)_{\infty}}\sum_{m=1}^{\infty}z^mq^{m\choose2}(1-q^m) \right) \Bigg]_{z=-1} \\
 &=\frac{1}{(q;q)_{\infty}}\sum_{m=1}^{\infty}m(-1)^{m-1}q^{m\choose2}(1-q^m) \\
&=\frac{1}{(q;q)_{\infty}}\Bigg(\sum_{m=1}^{\infty}m(-1)^{m-1}q^{m\choose2}-\sum_{m=1}^{\infty}m(-1)^{m-1}q^{{m\choose2}+m}\Bigg) \\
&=\frac{1}{(q;q)_{\infty}}\Bigg(\sum_{m=0}^{\infty}(m+1)(-1)^mq^{m+1\choose2}-\sum_{m=0}^{\infty}m(-1)^{m-1}q^{{m+1\choose2}}\Bigg) \\
&=\frac{1}{(q;q)_{\infty}}\sum_{m=0}^{\infty}(2m+1)(-1)^mq^{m+1\choose2}\\
&=(q;q)_\infty^2,
\end{align*}
where we used Jacobi's identity \eqref{Jacobi} in the last equality. Putting this in \eqref{auxiliaryid} gives us the desired identity \eqref{sigmasigmabar}. 
\end{proof}
We are now in a position to present an alternative proof of Theorem \ref{main refinement}. 
\begin{proof}[Theorem \rm{\ref{main refinement}}][]
To prove identity \eqref{sigma_oGFN} in Theorem \ref{main refinement}, we make use of Lemma \ref{sigmabar gfn} and \eqref{AndrewsNewman Sigma Identity}.  Adding  \eqref{AndrewsNewman Sigma Identity} and \eqref{sigmasigmabar} together gives us
\begin{align}
\sum_{n=0}^{\infty}\left(\sigma \text{mex}(n)+\bar{\sigma}\text{mex}(n)\right)q^n=(-q;q)^2_{\infty}+(q;q)^2_{\infty}\label{sigma+sigmabar}.
\end{align}
Here, note that the summand on the left hand side can be written as 
\begin{align*}
\sigma \text{mex}(n)+\bar{\sigma}\text{mex}(n) &  =\sum_{\pi \in \mathcal{P}( n)}\text{mex}(\pi)-\sum_{\pi \in \mathcal{P}( n)}(-1)^{\text{mex}(\pi)}\text{mex}(\pi) \\
 &=2\sum_{\substack{\pi\in \mathcal{P} (n)\\2\nmid\text{mex}(\pi)}}\text{mex}(\pi)=2\sigma_o\text{mex}(n).
\end{align*}
Substituting this value into \eqref{sigma+sigmabar}, we find that
\begin{align*}
\sum_{n=0}^{\infty}\sigma_o\text{mex}(n)q^n=\frac{(-q;q)^2_{\infty}+(q;q)^2_{\infty}}{2}. 
\end{align*}
We have already interpreted the sum on the right-hand side of the above equation in \eqref{even_distinct_twocolor}.  Thus, we conclude that \eqref{sigma_oGFN} holds true and the proof of \eqref{sigma_eGFN} follows from \eqref{sigma_oGFN} and \eqref{AndrewsNewman Sigma Identity}. 
\end{proof}

\begin{proof}[Theorem {\rm \ref{congruence}}][] 
From Theorem \ref{main refinement}, we have
\begin{equation}\label{gfn sigma o}
\sum_{n=0}^{\infty} \sigma_o \textup{mex} (n) q^n = \frac{(-q; q)^2_{\infty} + (q; q)^2_{\infty}}{2},
\end{equation}
which  can be recast as
\begin{equation} \label{pre 2-diss}
\sum_{n=0}^{\infty} \sigma_o \textup{mex} (n) q^n = \frac{1}{2} \left( \frac{f_2^2}{f_1^2} + f_1^2\right).
\end{equation}
Substituting  the $2$-dissections of $f_1^{2}$ and $1/f_1^{2}$ from  \eqref{f_1^2} and \eqref{reci} into \eqref{pre 2-diss}, we have
\begin{equation} \label{pre-extract}
\sum_{n=0}^{\infty} \sigma_o \textup{mex} (n) q^n = \frac{1}{2} \left( \frac{f_8^5}{f_2^3 f_{16}^2} + 2q \frac{f_4^2 f_{16}^2}{f_2^3 f_8} + \frac{f_2 f_8^5}{f_4^2 f_{16}^2} -2q \frac{f_2 f_{16}^2}{f_8} \right).
\end{equation}
Extracting the odd powers on both sides of \eqref{pre-extract}, that is, equating the sub-series consisting of odd powers of $q$ on both sides of \eqref{pre-extract}, we obtain
\begin{equation*}
\sum_{n=0}^{\infty} \sigma_o \textup{mex} (2n+1) q^{2n+1} = q \left( \frac{f_4^2 f_{16}^2}{f_2^3 f_8} - \frac{f_2 f_{16}^2}{f_8} \right).
\end{equation*}
Dividing both sides of the above by $q$ and then replacing $q^2$ by $q$, we arrive at 
\begin{equation} \label{gfn sigma o odd}
\sum_{n=0}^{\infty} \sigma_o \textup{mex} (2n+1) q^{n} = \frac{f_2^2 f_{8}^2}{f_1^3 f_4} - \frac{f_1 f_{8}^2}{f_4}.
\end{equation}

Now we prove that $f_1^4 \equiv f_2^2 \pmod{4}$. Note that
\begin{align*}
f_1^4 = \prod_{r=1}^{\infty} (1 - q^r)^4\quad\textup{and}\quad f_2^2 = \prod_{r=1}^{\infty} (1 - q^{2r})^2.
\end{align*}
But,
\begin{equation*}
(1-q^r)^4 = 1-4q^r + 6q^{2r}-4q^{3r}+q^{4r} \equiv 1-2q^{2r} + q^{4r} = (1-q^{2r})^2 \pmod{4}.
\end{equation*}
Therefore,
\begin{align*}
f_1^4 \equiv f_2^2 \pmod{4}.
\end{align*}
Employing the above congruence in \eqref{gfn sigma o odd}, we find that
\begin{align*} 
&\sum_{n=0}^{\infty} \sigma_o \textup{mex} (2n+1) q^{n} = \frac{f_1 f_{8}^2}{f_4} \left( \frac{f_2^2}{f_1^4} - 1 \right)\equiv 0 \pmod{4}.
\end{align*}
Thus,
\begin{align*} 
\sigma_o \textup{mex} (2n+1) \equiv 0 \pmod{4},
\end{align*}
which completes the proof of \eqref{sigma o mod 4 cong}. 

Now we move to the proof of the second congruence of the theorem,  namely,  \eqref{conj1}.
To this end,  we need to find a $2$-dissection of the generating function \eqref{gfn sigma o odd} of $\sigma_o \textup{mex} (2n+1)$. We can rewrite \eqref{gfn sigma o odd} as
\begin{align*}
\sum_{n=0}^{\infty} \sigma_o \textup{mex} (2n+1) q^{n} &= \frac{f_8^2}{f_4}  \left( \frac{1}{f_1^2} - \frac{f_1^2}{f_2^2} \right) \psi(q).
\end{align*}
Employing \eqref{f_1^2}, \eqref{reci} and \eqref{psi 2diss} in the above, we have
\begin{align*}
&\sum_{n=0}^{\infty} \sigma_o \textup{mex} (2n+1) q^{n} \notag\\
&= \frac{f_8^2}{f_4} \left( \frac{f_8^5}{f_2^5 f_{16}^2} + 2q \frac{f_4^2 f_{16}^2}{f_2^5 f_8} - \frac{f_8^5}{f_2 f_4^2 f_{16}^2} +2q \frac{f_{16}^2}{f_2 f_8} \right) \left(f(q^6, q^{10}) + q f(q^2, q^{14})\right), 
\end{align*}
which is a 2-dissection of the generating function  of $\sigma_o \textup{mex} (2n+1)$. Extracting the even terms from both sides of the above and then replacing $q^2$ by $q$, we find that 
\begin{align}
\sum_{n=0}^{\infty} \sigma_o \textup{mex} (4n+1) q^{n} &= \frac{f_4^2}{f_2} \left(   f(q^3, q^{5}) \left( \frac{f_4^5}{f_1^5 f_{8}^2} - \frac{f_4^5}{f_1 f_2^2 f_{8}^2} \right) + 2q f(q, q^{7}) \left( \frac{f_2^2 f_{8}^2}{f_1^5 f_4}  +  \frac{f_{8}^2}{f_1 f_4}\right)\right) \nonumber \\
&= f(q^3, q^{5}) \frac{f_4^7}{f_1 f_2^3 f_8^2} \left( \frac{f_2^2}{f_1^4} - 1 \right) + 2q f(q, q^7) \frac{f_4 f_8^2}{f_1 f_2} \left( \frac{f_2^2}{f_1^4} + 1 \right). \label{N.1}
\end{align}
To simplify the above, we transform the terms involving $f(q, q^7)$ into terms involving $f(q^3, q^5)$. To that end, first we recall from \cite[p. 51, Example (iv)]{RNBIII} that
\begin{align*}
2f^2(q^3, q^5) &= \psi(q) \varphi(q^2) + \psi(-q) \varphi(-q^2),  \\
2q f^2(q, q^7) &= \psi(q) \varphi(q^2) - \psi(-q) \varphi(-q^2). 
\end{align*}
The above two imply that
\begin{align}
f^2(q^3, q^5) + q f^2(q, q^7) &= \psi(q) \varphi(q^2), \label{N.4} \\
f^2(q^3, q^5) - q f^2(q, q^7) &= \psi(-q) \varphi(-q^2). \label{N.5} 
\end{align}
Now, with the aid of Jacobi's triple product identity \eqref{JTPI}, we have
\begin{align}
f(q^3, q^{5}) f(q, q^{7}) &= (-q, -q^3, -q^5, -q^7; q^8)_{\infty} f_8^2 \nonumber \\
&= (-q; q^2)_{\infty} f_8^2. \label{N.6}
\end{align}
Dividing both sides of \eqref{N.4} by $f(q^3, q^{5}) f(q, q^{7})$ and then using \eqref{N.6}, we obtain
\begin{equation} \label{N.7}
\frac{f(q^3, q^5)}{f(q, q^7)} + q \frac{f(q, q^7)}{f(q^3, q^5)} = \frac{\psi(q) \varphi(q^2)}{(-q; q^2)_{\infty} f_8^2}=\frac{\varphi(q^2)}{\psi(q^4)}.
\end{equation}
Again, dividing both sides of \eqref{N.5} by $f(q^3, q^{5}) f(q, q^{7})$, followed by a second appeal to \eqref{N.6} and subsequent simplifications give us
\begin{equation}\label{N.8}
\frac{f(q^3, q^5)}{f(q, q^7)} - q \frac{f(q, q^7)}{f(q^3, q^5)} = \frac{\psi(-q) \varphi(-q^2)}{(-q; q^2)_{\infty} f_8^2} = \frac{\varphi(-q)}{\psi(q^4)}. 
\end{equation}
Subtracting \eqref{N.8} from \eqref{N.7}, we have
\begin{align} \label{N.9}
2q \frac{f(q, q^7)}{f(q^3, q^5)} = \frac{\varphi(q^2) - \varphi(-q)}{\psi(q^4)},\notag\\\intertext{that is,}
2q f(q, q^7) = f(q^3, q^5) \frac{\varphi(q^2) - \varphi(-q)}{\psi(q^4)}.
\end{align}
Employing \eqref{N.9} in \eqref{N.1}, and then using the $q$-product representations of $\varphi(q^2)$, $\varphi(-q)$ and $\psi(q^4)$ from \eqref{JTP for phi(q)} \eqref{JTP for phi(-q)} and \eqref{Ramatheta}, we obtain
\begin{align}\label{gen-4n1}
&\sum_{n=0}^{\infty} \sigma_o \textup{mex} (4n+1) q^n \notag\\
&= f(q^3, q^5) \frac{f_4^7}{f_1 f_2^3 f_8^2} \left( \frac{f_2^2}{f_1^4} - 1 \right) + f(q^3, q^5) \frac{f_4 f_8^2}{f_1 f_2} \left( \frac{f_2^2}{f_1^4} + 1 \right) \left( \frac{\varphi(q^2) - \varphi(-q)}{\psi(q^4)}\right) \notag \\
&= f(q^3, q^5) \frac{f_4^2}{f_1^3} \left( 2 \frac{f_4^5}{f_1^2 f_2 f_8^2} - \frac{f_1^4}{f_2^2} - 1 \right). 
\end{align}
Noting that $2 f_1^4 \equiv 2 f_2^2 \pmod{8}$, we infer from the above that
\begin{align}
\sum_{n=0}^{\infty} \sigma_o \textup{mex} (4n+1) q^n &\equiv f(q^3, q^5) \frac{f_4^2}{f_1^3} \left( 2 \frac{f_1^2 f_4}{f_2^3} - \frac{f_1^4}{f_2^2} - 1 \right) \notag\\
&\equiv f(q^3, q^5) \frac{f_4^2}{f_1^3} \left( 2 \frac{f_1^2 f_2}{f_4} - \frac{f_1^4}{f_2^2} - 1 \right)\notag\\
&\equiv f(q^3, q^5) \frac{f_4^2}{f_1^3} \left(2 \varphi(-q) \varphi(-q^2) - \varphi^2 (-q) - 1\right) \pmod{8}.\label{N.10}
\end{align}
Now, from \eqref{phiminus}, we have 
\begin{align}\label{phimp}
\varphi(-q) \equiv \varphi(q) \pmod{4}.
\end{align}
Multiplying \eqref{phiplus} and \eqref{phiminus}, we also have
 \begin{align*}
\varphi^2(q) -\varphi^2(-q) =8q\varphi(q^4) \psi(q^8), 
\end{align*}
which implies that
\begin{align}\label{phisq-mp}
\varphi^2(-q) \equiv \varphi^2(q) \pmod{8}.
\end{align}
With the aid of \eqref{phimp} and \eqref{phisq-mp}, we can recast \eqref{N.10} as 
\begin{align}
\sum_{n=0}^{\infty} \sigma_o \textup{mex} (4n+1) q^n 
&\equiv f(q^3, q^5) \frac{f_4^2}{f_1^3} \left(2 \varphi(q) \varphi(q^2) - \varphi^2 (q) - 1\right)\pmod{8}.\label{N.10A}
\end{align}
Hence from the above identity, to prove \eqref{conj1}, it is enough to show that
\begin{equation} \label{N.12}
2 \varphi(q) \varphi(q^2) - \varphi^2 (q) - 1 \equiv 0 \pmod{8}.
\end{equation}
To prove \eqref{N.12}, we recall the Lambert series representations of $\varphi^2(q)$ and $\varphi(q) \varphi(q^2)$ from \cite[ p. 58, 
Eq. (3.2.9); p. 73, Eq. (3.7.3)]{bcbSpirit}, namely,
\begin{align}
\varphi^2(q) &= 1 + 4 \sum_{n=1}^{\infty} \frac{q^n}{1+q^{2n}}, \label {lambert1} \\
\varphi(q) \varphi(q^2) &= 1 + 2 \sum_{n=1}^{\infty} \frac{q^n + q^{3n}}{1 + q^{4n}}. \label{lambert2}
\end{align}
From \eqref{lambert1} and \eqref{lambert2}, we see that
\begin{align*}
2 \varphi(q) \varphi(q^2) - \varphi^2 (q) - 1 &= 4 \left( \sum_{n=1}^{\infty} \frac{q^n + q^{3n}}{1 + q^{4n}} - \sum_{n=1}^{\infty} \frac{q^n}{1+q^{2n}} \right)  \\
&\equiv 4 \left( \sum_{n=1}^{\infty} \frac{q^n (1 + q^{2n})}{1- q^{4n}} - \sum_{n=1}^{\infty} \frac{q^n}{1 + q^{2n}} \right)  \\
&\equiv 4 \left( \sum_{n=1}^{\infty} \frac{q^n}{1 - q^{2n}} - \sum_{n=1}^{\infty} \frac{q^n}{1 - q^{2n}} \right) \equiv 0 \pmod{8}.
\end{align*}
Employing the above in \eqref{N.10A}, we readily arrive at \eqref{conj1}. 

Next,  we give the proof of the last congruence,  namely,  \eqref{econj1}.
Proceeding as in  \eqref{gfn sigma o}--\eqref{gfn sigma o odd}, but with 
\begin{equation*}
\sum_{n=0}^{\infty} \sigma_e \textup{mex}(n) q^n = \frac{(-q; q)_{\infty}^2 - (q; q)_{\infty}^2}{2}
\end{equation*}
instead of 
\begin{equation*}
\sum_{n=0}^{\infty} \sigma_o \textup{mex}(n) q^n = \frac{(-q; q)_{\infty}^2 + (q; q)_{\infty}^2}{2},
\end{equation*}
we find that
\begin{equation}\label{N.13}
\sum_{n=0}^{\infty} \sigma_e \textup{mex}(2n) q^n = \frac{1}{2} \left( \frac{f_4^5}{f_1^3 f_8^2} - \frac{f_1 f_4^5}{f_2^2 f_8^2} \right) = \frac{1}{2} \frac{f_4^6}{f_2^2 f_8^4} \left( \frac{f_2^2 f_8^2}{f_1^3 f_4} - \frac{f_1 f_8^2}{f_4} \right). 
\end{equation}
From \eqref{N.13} and \eqref{gfn sigma o odd}, it follows that
\begin{equation*}
2 \sum_{n=0}^{\infty} \sigma_e \textup{mex} (2n) q^n = \frac{f_4^6}{f_2^2 f_8^4}\sum_{n=0}^{\infty} \sigma_o \textup{mex} (2n+1) q^n.
\end{equation*}
Extract the even powers of $q$ in the above equation and then replace $q^2$ by $q$ to see that 
\begin{equation}\label{N.14}
2 \sum_{n=0}^{\infty} \sigma_e \textup{mex} (4n) q^n = \frac{f_2^6}{f_1^2 f_4^4} \sum_{n=0}^{\infty} \sigma_o \textup{mex} (4n+1) q^n.
\end{equation}
Since by \eqref{conj1}, we have already shown that $\sigma_o \textup{mex} (4n+1) \equiv 0 \pmod{8}$, it is immediate from \eqref{N.14} that $\sigma_e \textup{mex} (4n) \equiv 0 \pmod{4}$, which is \eqref{econj1}. 
\end{proof}

We now give an alternative proof of \eqref{sigma o mod 4 cong} using Proposition \ref{HSS}. 

\noindent\emph{Alternative proof of \eqref{sigma o mod 4 cong}.}
 For any positive integer $m$, we have
\begin{align*}
\sigma_o \textup{mex}(m) &= \sum_{\substack{\pi \in \mathcal{P}(m) \\ \textup{mex}(\pi) \equiv 1 \pmod{4}}} \textup{mex}(\pi) + \sum_{\substack{\pi \in \mathcal{P}(m) \\ \textup{mex}(\pi) \equiv 3 \pmod{4}}} \textup{mex}(\pi) \\
& \equiv \underbrace{1 + 1 + \cdots + 1}_{\text{$o_1(m)$ times}}
+ 
\underbrace{3 + 3 + \cdots + 3}_{\text{$o_3(m)$ times}}
\pmod{4} \\
& \equiv \underbrace{1 + 1 + \cdots + 1}_{\text{$o_1(m)$ times}}
+ 
\underbrace{(-1) + (-1) + \cdots + (-1)}_{\text{$o_3(m)$ times}}
\pmod{4} \\
&= o_1(m) - o_3(m).
\end{align*}
Thus,
\begin{equation*}
\sigma_o \textup{mex}(m)  \equiv o_1(m) - o_3(m) \pmod{4} \quad \text{for all} \ m.
\end{equation*}
Invoking Proposition \ref{HSS} with odd $m (= 2n+1)$ leads us to
\begin{equation*}
\sigma_o \textup{mex}(2n+1) \equiv 0 \pmod 4,
\end{equation*}
as desired. 

\begin{proof}[Proposition {\rm \ref{master proposition}}][]
We rewrite the double series $M(z,q)$, as defined in \eqref{M(z,q)}, as
\begin{align}
\sum_{n=0}^{\infty}\sum_{m=1}^{\infty} x(m,n)z^mq^n=\frac{1}{(q;q)_{\infty}}\sum_{m=1}^{\infty}z^mq^{m\choose2}(1-q^m). \label{M(z,q) double series}
\end{align}
We proceed with the right-hand side of the above identity as follows.
\begin{align}
\frac{1}{(q;q)_{\infty}}\sum_{m=1}^{\infty}z^mq^{m\choose2}(1-q^m)&=\frac{1}{(q;q)_{\infty}}\left(\sum_{m=1}^{\infty}z^mq^{m\choose2}-\sum_{m=1}^{\infty}z^mq^{m+1\choose2}\right)\nonumber
\\&= \sum_{n=0}^{\infty}p(n)q^n   \left(z\sum_{m=0}^{\infty}z^mq^{m+1\choose2}-\sum_{m=1}^{\infty}z^mq^{m+1\choose2}\right)\nonumber
\\
&=\sum_{n=0}^{\infty}p(n)q^n\left(1+(z-1)\sum_{m=0}^{\infty}z^mq^{m+1\choose2}\right)\nonumber
\\&=\sum_{n=0}^{\infty}p(n)q^n+\sum_{n=0}^{\infty}\sum_{m=0}^{\infty}(z-1)z^mp\left(n-\frac{m(m+1)}{2}\right)q^n. \label{Simplified RHS of M(z,q)}
\end{align}
Comparing the coefficient of $q^n$ on the left side of \eqref{M(z,q) double series} with that on the right side of  \eqref{Simplified RHS of M(z,q)}, we obtain \eqref{Master Identity}. 
\end{proof}

\begin{proof}[Theorem \rm{\ref{kth moment of mex}}][] First, recall a useful differential operator, namely, 
$$\mathcal{D}_z:=z \displaystyle\frac{\partial }{\partial z}.$$ Next, observe that $\mathcal{D}_z^k (z^m)=m^kz^m$ for all positive integers $k$.  Hence by applying the operator $k$ times on both sides of \eqref{Master Identity},  we find that
\begin{align*}
\sum_{m=1}^{\infty} x(m,n)\mathcal{D}_z^k (z^m)= \sum_{m=0}^{\infty}p\Big(n- \frac{m(m+1)}{2}\Big)\mathcal{D}_z^k(z^{m+1}-z^m),
\end{align*}
which readily leads to the desired identity \eqref{Diferntial form of Master Identity}. 
\end{proof}
\begin{proof}[Corollary {\rm \ref{application_kth moment}}][]
Simply substitute $k=1$ in \eqref{moment_kth} and \eqref{moment_bar} to obtain \eqref{sigmamex_analogue} and \eqref{sigmabarmex_analogue},  respectively.
\end{proof}
\begin{proof}[Theorem {\rm \ref{kth moment_sigma_odd_even}}][]
Adding the equations \eqref{moment_kth} and \eqref{moment_bar}, we obtain 
\begin{align}
\sigma^{(k)}\text{mex}(n)+{\bar{\sigma}}^{(k)}\text{mex}(n)=\sum_{m=0}^{\infty} & \left(\left\{1+(-1)^m\right\}(m+1)^k-\left\{1-(-1)^m\right\}m^k\right) \nonumber \\
& \times p\left(n- \frac{m(m+1)}{2}\right).\label{sigma_k+ sigmabar_k}
\end{align}
Note that the left-hand side of the above equation is
\begin{align*}
\sigma^{(k)}\text{mex}(n)+{\bar{\sigma}}^{(k)}\text{mex}(n)& =\sum_{\pi\in \mathcal{P}(n)}\left\{ \left(\text{mex}(\pi)\right)^k-(-1)^{\text{mex}(\pi)}\left(\text{mex}(\pi)\right)^k\right\}\\
&=2\sum_{\substack{\pi\in \mathcal{P}(n)\\2\nmid\text{mex}(\pi)}}\left(\text{mex}(\pi)\right)^k \\
&=2 \sigma_o^{(k)}\text{mex}(n).
\end{align*}
Putting this together with \eqref{sigma_k+ sigmabar_k}, we arrive at
\begin{align*}
\sigma_o^{(k)}\text{mex}(n)&=\sum_{m=0}^{\infty}\left(\left\{\frac{1+(-1)^m}{2}\right\}(m+1)^k-\left\{\frac{1-(-1)^m}{2}\right\}m^k\right)p\left(n- \frac{m(m+1)}{2}\right)\label{sigma_k+ sigmabar_k}
\\&=\sum_{m=0}^{\infty}\left\{\delta_m(m+1)^k-(1-\delta_m)m^k\right\}p\left(n- \frac{m(m+1)}{2}\right),
\end{align*}
where $\delta_m = \displaystyle\frac{1+(-1)^m}{2}$. This proves \eqref{General sigma odd formula}. The proof of \eqref{General sigma even formula} is similar except for the minor change that we are required to subtract \eqref{moment_bar} from \eqref{moment_kth} at the beginning.
\end{proof}
\begin{proof}[Corollary {\rm \ref{sigma_odd_even_mex}}][]
It is enough to prove \eqref{sigma odd formula}, the proof of \eqref{sigma even formula} being almost identical. To obtain \eqref{sigma odd formula}, we put $k=1$ in \eqref{General sigma odd formula} to get
\begin{align*}
{\sigma_o}\text{mex}(n)={\sigma_o}^{(1)}\text{mex}(n)=\sum_{m=0}^{\infty}\left\{\delta_m(m+1)-(1-\delta_m)m\right\}p\left(n- \frac{m(m+1)}{2}\right).
\end{align*}
In view of the definitions of $\delta_m$  and $t_m$, we can write the previous equality as
{\allowdisplaybreaks \begin{align*}
\sigma_o\text{mex}(n)&=\sum_{\substack{m=0\\ \text{m even}}}^{\infty}(m+1)p\left(n- t_m\right)-\sum_{\substack{m=1\\ \text{m odd}}}^{\infty}mp\left(n- t_m\right)
\\ &=\sum_{m=0}^{\infty}(2m+1)p\left(n- t_{2m}\right)-\sum_{m=0}^{\infty}(2m+1)p\left(n- t_{2m+1}\right)
\\ &=\sum_{m=0}^{\infty}(2m+1)\left\{p\left(n- t_{2m}\right)-p\left(n- t_{2m+1}\right)\right\}.
\end{align*}}
\end{proof}

\section{An alternate proof of a result due to Hopkins, Sellers and Stanton}\label{sec-five}

In this section, we give an alternative proof of a result due to Hopkins, Sellers and Stanton \cite[Proposition 8]{HoSeStan2022}, which we already stated as Proposition \ref{HSS}. 
\begin{proof-alt} 
Let us recall \eqref{L(z,q)}:
\begin{align*}
\sum_{n=0}^{\infty}\sum_{m=0}^{\infty} x(2m+1,n)z^{2m+1}q^n=\frac{1}{(q;q)_{\infty}}\sum_{m=0}^{\infty}z^{2m+1}q^{{2m+1}\choose2}(1-q^{2m+1}).
\end{align*}
Now we substitute $z=i$ in the above identity to get
\begin{equation}
\sum_{n=0}^{\infty}\sum_{m=0}^{\infty} x(2m+1,n)(-1)^mq^n=\frac{1}{(q;q)_{\infty}}\sum_{m=0}^{\infty}(-1)^mq^{{2m+1}\choose2}(1-q^{2m+1}).\label{ABC}
\end{equation}
First,  we simplify the left-hand side as follows:
\begin{align}
\sum_{n=0}^{\infty}\sum_{m=0}^{\infty} x(2m+1,n)(-1)^mq^n&=\sum_{n=0}^{\infty}\Bigg(\sum_{m=0}^{\infty}  x(4m+1,n)-\sum_{m=0}^{\infty}  x(4m+3,n)\Bigg)q^n\nonumber
\\&=\sum_{n=0}^{\infty}\Big(o_1(n)-o_3(n)\Big)q^n.\label{O1minusO3}
\end{align}
Second, we simplify the right-hand side of \eqref{ABC}:
\allowdisplaybreaks{
\begin{align}
\frac{1}{(q;q)_{\infty}}\sum_{m=0}^{\infty}(-1)^mq^{{2m+1}\choose2}(1-q^{2m+1})
&=\frac{1}{(q;q)_{\infty}}\left(\sum_{m=0}^{\infty}(-1)^mq^{{2m+1}\choose2}-\sum_{m=0}^{\infty}(-1)^mq^{{2m+2}\choose2}\right)\nonumber
\\&=\frac{1}{(q;q)_{\infty}}\left(\sum_{j=0}^{\infty}(-1)^jq^{{2j+1}\choose2}+\sum_{j=-\infty}^{-1}(-1)^jq^{{2j+1}\choose2}\right)\nonumber
\\&=\frac{1}{(q;q)_{\infty}}\sum_{j=-\infty}^{\infty}(-1)^jq^{{2j+1}\choose2}, \label{bilateral series}
\end{align}}
where the rightmost sum in the second equality arises by substituting $j=-(m+1)$.  
Putting \eqref{O1minusO3} and \eqref{bilateral series} into \eqref{ABC} gives us
\begin{align}
\sum_{n=0}^{\infty}\Big(o_1(n)-o_3(n)\Big)q^n=\frac{1}{(q;q)_{\infty}}\sum_{j=-\infty}^{\infty}(-1)^jq^{{2j+1}\choose2}\label{PQR}.
\end{align}
Replacing $a$ by $-q^3$ and $b$ by $-q$ in \eqref{JTPI}, we have
\begin{align}
& \sum_{j=-\infty}^{\infty}(-q^3)^{\frac{j(j+1)}{2}} (-q)^{\frac{j(j-1)}{2}}=(q^3;q^4)_{\infty}(q;q^4)_{\infty}(q^4;q^4)_{\infty};\nonumber
\\\intertext{that is,} & \sum_{j=-\infty}^{\infty}(-1)^j q^{2j^2 + j} = \sum_{j=-\infty}^{\infty}(-1)^j q^{{2j+1}\choose2}=(q;q^4)_{\infty}(q^3;q^4)_{\infty}(q^4;q^4)_{\infty}.\label{closed form for bilateral}
\end{align}
Now utilizing \eqref{closed form for bilateral} in \eqref{PQR}, we arrive at
\begin{align}
\sum_{n=0}^{\infty}\Big(o_1(n)-o_3(n)\Big)q^n & =\frac{1}{(q;q)_{\infty}}\times (q;q^4)_{\infty}(q^3;q^4)_{\infty}(q^4;q^4)_{\infty} \nonumber \\
 &  =\frac{1}{(q^2;q^4)_{\infty}} =(-q^2;q^2)_{\infty},\label{distinct part partition}
\end{align}
where the last step follows by Euler's identity $(-q; q)_{\infty} = \displaystyle \frac{1}{(q; q^2)_{\infty}}
$. Now keeping in mind that $(-q; q)_{\infty}$ is the generating function for partitions of $n$ into distinct parts,  we can rephrase \eqref{distinct part partition} as 
\begin{align*}
& \sum_{n=0}^{\infty}\Big(o_1(n)-o_3(n)\Big)q^n=(-q^2;q^2)_{\infty}=\sum_{n=0}^{\infty}q(n)q^{2n};\\  \intertext{that is,} & \sum_{n=0}^{\infty}\Big(o_1(n)-o_3(n)\Big)q^n=\sum_{n=0}^{\infty}q(n/2)q^n. 
\end{align*}
Since $q(n/2)$ is non-zero only when $n/2$ is a natural number, we thus have
\begin{align*}
o_1(n)-o_3(n)=\begin{cases}
0, & \text{if}~ n~ \text{is odd},
\\q(n/2), & \text{otherwise}.
\end{cases}
\end{align*}
\end{proof-alt}

\section{Concluding Remarks}\label{concluding remarks}
In this paper, we established a refinement of equation \eqref{AndrewsNewman Sigma Identity} of Andrews and Newman on $\sigma\textup{mex}(n)$, namely, Theorem \ref{main refinement}. Moreover, we also proved congruences for $\sigma_o\text{mex}(n)$ and $\sigma_e\text{mex}(n)$ modulo $4$ and $8$, namely, Theorem \ref{congruence}.  Based on computations using Magma and Mathematica, we propose the following conjecture.
\begin{conjecture}\label{main_conjecture}
 Let $n$ be a non-negative integer. Then
\begin{align} \label{conj2}
\sigma_o \textup{mex}(8n+1) &\equiv 0 \pmod {16}\\\intertext{and}
\sigma_e \textup{mex}(8n) & \equiv 0 \pmod {8}. \label{econj2}
\end{align}
\end{conjecture}
Working on the generating function \eqref{gen-4n1} of $\sigma_o \textup{mex}(4n+1)$, we can show that
\begin{align*}
&\sum_{n=0}^\infty\sigma_o \textup{mex}(8n+1)q^n\\
&=\left(f(q^3,q^5)f(q^7,q^9)+q^2f(q,q^7)f(q,q^{15})\right)\left(-\dfrac{f_2^2f_4^5}{f_1^7f_8^2}-\dfrac{f_4^5}{f_1^3f_8^2}+2\dfrac{f_2^7f_4^8}{f_1^{13}f_8^4}+8q\dfrac{f_2^{11}f_8^4}{f_1^{13}f_4^4}\right)\\
&\quad+2q\left(f(q,q^7)f(q^7,q^9)+qf(q^3,q^5)f(q,q^{15})\right)\left(-\dfrac{f_2^4f_8^2}{f_1^7f_4}+\dfrac{f_2^2 f_8^2}{f_1^3f_4}+4\dfrac{f_2^9f_4^2}{f_1^{13}}\right).
\end{align*}
The proposed congruence \eqref{conj2} would follow if one could show that the right side of the above equality vanishes under modulo 16. At this moment, we do not have a proof for that. A  similar generating function expression for $\sigma_e \textup{mex}(8n)$ may also be found.   

Grabner and Knopfmacher \cite[Eq.  (4.7)]{GrabnerKnopfmacher} showed that
$\sigma\textup{mex}(n)$ satisfies the following Hardy-Ramanujan type asymptotic formula:
\begin{align*}
\sigma\textup{mex}(n) \sim  \frac{1}{4  \sqrt[4]{6n^3}} \exp\left( \pi \sqrt{\frac{2n}{3}}   \right) \quad \textrm{as} \, \, n \rightarrow \infty.
\end{align*}
They \cite[Theorem 3]{GrabnerKnopfmacher}  also obtained a Hardy-Ramanujan-Rademacher type exact formula for  $ \sigma\textup{mex}(n)$. It will be highly desirable to obtain asymptotic formulae for $\sigma_o\text{mex}(n)$ and $\sigma_e\text{mex}(n)$ and their moments,  and to do that the identities \eqref{sigma odd formula} and \eqref{sigma even formula} might be useful.  It would also be interesting to find a combinatorial proof of the main result, Theorem \ref{main refinement}, in the manner of Ballantine and Merca \cite{BallantineMerca}, who proved the original result \eqref{AndrewsNewman Sigma Identity} of Andrews and Newman combinatorially.

\textbf{Acknowledgments.}
We thank the anonymous referee for carefully reading our manuscript and giving constructive suggestions. 
The second author wants to thank the Department of Mathematics,  Pt.  Chiranji Lal Sharma Government College,  Karnal for a conducive research environment. The third author is a SERB National Post Doctoral Fellow (NPDF) supported by the fellowship PDF/2021/001090 and would like to thank SERB for the same. The last author wants to thank SERB for the Start-Up Research Grant SRG/2020/000144 and MATRICS Grant MTR/2022/000545.

\end{document}